\documentclass{article}%
\usepackage{amsmath}
\usepackage{amsfonts}
\usepackage{amssymb}
\usepackage{graphicx}%
\providecommand{\U}[1]{\protect\rule{.1in}{.1in}}
\newtheorem{theorem}{Theorem}

\newtheorem{proposition}[theorem]{Proposition}

\newenvironment{proof}[1][Proof]{\noindent\textbf{#1.} }{\ \rule{0.5em}{0.5em}}
\begin{document}

\title{Fast algorithms for interpolation with clamped $L$-splines of order four}
\author{O. Kounchev, H. Render, G. Simeonov, Ts. Tsachev}
\date{}
\maketitle

\begin{abstract}
	Interpolation and smoothing using cubic and generalized splines are fundamental tools in data analysis and statistical modeling. Recently, fast computational algorithms were developed for natural $L$-splines of order four, which arise as piecewise solutions to the differential operator $L_{\xi}^2 = (\frac{d^2}{dt^2} - \xi^2)^2$. In this paper, we extend this mathematical framework to the important case of clamped (or complete) boundary conditions, where the first derivatives at the interval endpoints are prescribed. We explicitly construct the governing linear system for the interpolation problem and mathematically prove that the resulting tridiagonal matrix is strictly row diagonally dominant, thereby guaranteeing its invertibility and the numerical stability of the fast algorithm. The proposed method is implemented in MATLAB. Furthermore, the developed clamped $L$-splines provide a foundation for constructing multivariate clamped polysplines, which serve as a promising alternative to Physics-Informed Neural Networks (PINNs) for solving partial differential equations in Mathematical Physics. 
\end{abstract}

\noindent \textbf{Keywords:} $L$-splines, exponential splines, spline interpolation, clamped boundary conditions, fast algorithms, strictly diagonally dominant matrices. \\
MSC 2020: 65D07, 41A15, 65D05, 65F05.

\section{Introduction}

Interpolation with cubic polynomial splines is a popular method for the
analysis and smoothing of big data -- often used in statistical modelling,
see \cite{deBoor}, \cite{StoerBulirsch} and \cite{LycheSchumaker} for
mathematical aspects, and \cite{GreenSilverman}, \cite{Gu},
\cite{HastieTibshiraniFriedman}, \cite{RamsaySilverman} and \cite{Wahba1990}
for applications. Recently the authors proposed and studied in
\cite{kounchevRenderTsachev-BITpaper} fast algorithms for interpolation and
smoothing for a special class of generalized splines arising as piece-wise
solutions of differential operators, called $L-$splines. In
\cite{kounchevRenderTsachev-BITpaper} it is assumed that the differential
operator $L$ is of the form $L_{\xi}^{2}:=L_{\xi}\circ L_{\xi}$ where
\[
L_{\xi}:=\frac{d^{2}}{dt^{2}}-\xi^{2}\;\;\text{for }\;\;\;\xi\in\mathbb{R}.
\]
Let us recall the definition of an $L_{\xi}^{2}-$spline: assume that $\xi
\geq0$ is a real number and let $t_{1}<t_{2}<\dots<t_{n-1}<t_{n}$ be real
numbers. A function $g(\cdot)\in C^{2}[t_{1},t_{n}]$ is called an $L-$spline
if
\[
L_{\xi}^{2}g(t)=0\;\;\;\text{for }\;\;\;t\in(t_{j},t_{j+1}).
\]
A function $g(\cdot)\in C^{2}[t_{1},t_{n}]$ is called an $L_{\xi}^{2}-$spline
interpolating the data $\{g_{1},g_{2},\dots,g_{n}\}$ at $\{t_{1},t_{2}%
,\dots,t_{n}\}$ if
\[
g(t_{j})=g_{j}\text{ for }j=1,2,\dots,n.
\]
In order to obtain uniqueness for interpolation one has to add boundary
conditions at the endpoints $t_{1}$ and $t_{n}.$ In
\cite{kounchevRenderTsachev-BITpaper} we considered \emph{natural }%
$L-$\emph{splines} requiring the boundary condition
\[
L_{\xi}g(t_{1})=L_{\xi}g(t_{n})=0.
\]

The main aim of the present paper is to show that one can derive fast
algorithms as in \cite{kounchevRenderTsachev-BITpaper} for the case of
\emph{clamped splines} where the values $g^{\prime}(t_{1})$ and $g^{\prime
}(t_{n})$ are prescribed. These splines are also called "complete splines",
see \cite{deBoor} and/or in Matlab. For related work on more general
$L$-splines we refer to \cite{McCa90}, \cite{McCa91} and \cite{KRT2023}. 

\section{Fast interpolation for natural $L_{\xi}^{2}-$splines - a recap}

Suppose that $g\in C^{2}\left[  t_{1},t_{n}\right]  $ is an $L_{\xi}^{2}%
$-spline. In order to compute the $L_{\xi}^{2}$-spline $g$ explicitly one has
to solve for each interval $\left[  t_{j},t_{j+1}\right]  $ with
$j=1,....,n-1,$ the boundary value problem $L_{\xi}^{2}w=0$ with
\[
w\left(  t_{j}\right)  =g_{j},\ w\left(  t_{j+1}\right)  =g_{j+1},\text{ and
}L_{\xi}w\left(  t_{j}\right)  =L_{\xi}g\left(  t_{j}\right)  ,\ L_{\xi
}w\left(  t_{j+1}\right)  =L_{\xi}g\left(  t_{j+1}\right)  .
\]
Thus, in order to compute the natural $L_{\xi}^{2}$-spline $g$ interpolating
the data $z_{j}=g\left(  t_{j}\right)  $ for $j=1,...,n$, we need the
knowledge of the two vectors
\begin{align}
\mathbf{g}^{T}  &  :=\left(  g\left(  t_{1}\right)  ,g\left(  t_{2}\right)
,...,g\left(  t_{n}\right)  \right) \label{g}\\
\widetilde{\gamma}^{T}  &  :=\left(  L_{\xi}g\left(  t_{1}\right)
,...,L_{\xi}g\left(  t_{n}\right)  \right)  . \label{gamma}%
\end{align}
In case of a natural interpolating $L_{\xi}^{2}-$spline we know $\mathbf{g}%
^{T}$ and $L_{\xi}g\left(  t_{1}\right)  =L_{\xi}g\left(  t_{n}\right)  =0.$
For this reason in \cite{kounchevRenderTsachev-BITpaper} the vector
\[
\gamma^{T}:=\left(  L_{\xi}g\left(  t_{2}\right)  ,L_{\xi}g\left(
t_{3}\right)  ,...,L_{\xi}g\left(  t_{n-1}\right)  \right)
\]
is considered. When dealing with clamped $L_{\xi}^{2}-$splines we know
$\mathbf{g}^{T}$ and the boundary conditions $g^{\prime}\left(  t_{1}\right)
=d_{1}$ and $g^{\prime}\left(  t_{n}\right)  =d_{n}.$

In \cite{kounchevRenderTsachev-BITpaper} we proved the following:

\begin{proposition}
\label{Proppsij}Let $t_{1}<....<t_{n}$ and $g$ an $L_{\xi}^{2}-$spline. Then
$g$ restricted to the interval $\left(  t_{j},t_{j+1}\right)  $ for
$j=1,...,n-1$ is equal to
\begin{equation}
\psi_{j}\left(  t\right)  :=L_{\xi}g\left(  t_{j}\right)  A_{j}^{\left[
1\right]  }\left(  t\right)  +L_{\xi}g\left(  t_{j+1}\right)  B_{j}^{\left[
1\right]  }\left(  t\right)  +g\left(  t_{j}\right)  A_{j}^{\left[  2\right]
}\left(  t\right)  +g\left(  t_{j+1}\right)  B_{j}^{\left[  2\right]  }\left(
t\right)  \label{eqpsij}%
\end{equation}
where
\begin{equation}
A_{j}^{\left[  2\right]  }\left(  t\right)  =\frac{\sinh\xi\left(
t-t_{j+1}\right)  }{\sinh\xi\left(  t_{j}-t_{j+1}\right)  }\text{ and }%
B_{j}^{\left[  2\right]  }\left(  t\right)  =\frac{\sinh\xi\left(
t-t_{j}\right)  }{\sinh\xi\left(  t_{j+1}-t_{j}\right)  }, \label{eqdefAB}%
\end{equation}
and
\begin{equation}
A_{j}^{\left[  1\right]  }\left(  t\right)  =\frac{\Phi_{\xi}\left(
t-t_{j+1}\right)  }{\psi_{\xi}\left(  t_{j}-t_{j+1}\right)  }-A_{j}^{\left[
2\right]  }\left(  t\right)  \frac{\Phi_{\xi}\left(  t_{j}-t_{j+1}\right)
}{\psi_{\xi}\left(  t_{j}-t_{j+1}\right)  }, \label{eqA1}%
\end{equation}
and%
\begin{equation}
B_{j}^{\left[  1\right]  }\left(  t\right)  =\frac{\Phi_{\xi}\left(
t-t_{j}\right)  }{\psi_{\xi}\left(  t_{j+1}-t_{j}\right)  }-\frac{\Phi_{\xi
}\left(  t_{j+1}-t_{j}\right)  }{\psi_{\xi}\left(  t_{j+1}-t_{j}\right)
}B_{j}^{\left[  2\right]  }\left(  t\right)  \label{eqB1}%
\end{equation}

\end{proposition}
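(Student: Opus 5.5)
The plan is to combine a uniqueness statement for the local two-point boundary value problem with a direct verification that $\psi_j$ solves that problem. I assume the conventions of \cite{kounchevRenderTsachev-BITpaper}, which this excerpt does not restate: $\psi_{\xi}(t)=\sinh(\xi t)$ and $\Phi_{\xi}$ is the particular solution of $L_{\xi}\Phi_{\xi}=\psi_{\xi}$ with $\Phi_{\xi}(0)=0$, for instance $\Phi_{\xi}(t)=\frac{t\cosh(\xi t)}{2\xi}$. One checks $L_{\xi}\bigl(t\cosh \xi t\bigr)=2\xi\sinh\xi t$, so this choice works. I treat $\xi>0$ and obtain $\xi=0$ as a limit, where everything reduces to cubic polynomials.

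\textbf{Step 1 (uniqueness).} On $[t_j,t_{j+1}]$, $g$ solves $L_{\xi}^2 w=0$ with prescribed $w(t_j)$, $w(t_{j+1})$, $L_{\xi}w(t_j)$ and $L_{\xi}w(t_{j+1})$. Let $w$ be the difference of two such solutions.
\begin{itemize}
\item Put $v=L_{\xi}w$. Then $L_{\xi}v=0$ and $v$ vanishes at both endpoints. Since $v=a\cosh\xi t+b\sinh\xi t$, and a nontrivial such function has at most one zero, we get $v\equiv0$.
\item Then $L_{\xi}w=0$ and $w$ vanishes at both endpoints, so by the same argument $w\equiv0$.
\end{itemize}
Hence the local problem has at most one solution. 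It therefore suffices to show that $\psi_j$ satisfies $L_{\xi}^2\psi_j=0$ and matches the four boundary data.

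\textbf{Step 2 (the pair $A_j^{[2]},B_j^{[2]}$).} These are $\sinh$-combinations, so $L_{\xi}A_j^{[2]}=L_{\xi}B_j^{[2]}=0$. Direct evaluation gives
\[
A_j^{[2]}(t_j)=1,\qquad A_j^{[2]}(t_{j+1})=0,\qquad B_j^{[2]}(t_j)=0,\qquad B_j^{[2]}(t_{j+1})=1 .
\]
So these two terms reproduce the values $g(t_j)$ and $g(t_{j+1})$ and contribute nothing to $L_{\xi}\psi_j$.

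\textbf{Step 3 (the pair $A_j^{[1]},B_j^{[1]}$).} This is the step where the precise definition of $\Phi_{\xi}$ matters, so I regard it as the only real obstacle. Take $A_j^{[1]}$ first.
\begin{itemize}
\item Applying $L_{\xi}$ to \eqref{eqA1} kills the $A_j^{[2]}$ term, and $L_{\xi}\Phi_{\xi}=\psi_{\xi}$ gives
\[
L_{\xi}A_j^{[1]}(t)=\frac{\psi_{\xi}(t-t_{j+1})}{\psi_{\xi}(t_j-t_{j+1})}=A_j^{[2]}(t).
\]
\item Consequently $L_{\xi}^2A_j^{[1]}=0$, $L_{\xi}A_j^{[1]}(t_j)=1$ and $L_{\xi}A_j^{[1]}(t_{j+1})=0$.
\item For the values: $A_j^{[1]}(t_{j+1})=\Phi_{\xi}(0)/\psi_{\xi}(t_j-t_{j+1})-0=0$, using $\Phi_{\xi}(0)=0$.
\item Also $A_j^{[1]}(t_j)=\frac{\Phi_{\xi}(t_j-t_{j+1})}{\psi_{\xi}(t_j-t_{j+1})}-1\cdot\frac{\Phi_{\xi}(t_j-t_{j+1})}{\psi_{\xi}(t_j-t_{j+1})}=0$.
\end{itemize}
The same computation applied to $B_j^{[1]}$ in \eqref{eqB1} gives
\[
B_j^{[1]}(t_j)=B_j^{[1]}(t_{j+1})=0,\qquad L_{\xi}B_j^{[1]}=B_j^{[2]}.
\]

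\textbf{Step 4 (conclusion).} By Steps 2 and 3, $\psi_j$ as defined in \eqref{eqpsij} satisfies $L_{\xi}^2\psi_j=0$. It has values $g(t_j)$, $g(t_{j+1})$ and $L_{\xi}$-values $L_{\xi}g(t_j)$, $L_{\xi}g(t_{j+1})$ at the endpoints. By the uniqueness in Step 1, $g|_{(t_j,t_{j+1})}=\psi_j$.

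For $\xi=0$ one can either let $\xi\to0$ in the formulas or repeat the argument with linear and cubic polynomials. In that case uniqueness again reduces to the fact that a linear function vanishing at two distinct points is identically zero.
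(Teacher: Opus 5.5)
Your proof is correct and takes essentially the paper's route: the paper quotes this proposition from \cite{kounchevRenderTsachev-BITpaper}, and its recap reduces the statement to the four-condition boundary value problem $L_\xi^2 w=0$ on each $[t_j,t_{j+1}]$, which is exactly your uniqueness-plus-verification argument ($L_\xi A_j^{[1]}=A_j^{[2]}$, $L_\xi B_j^{[1]}=B_j^{[2]}$, vanishing endpoint values). One correction: the paper does state its conventions right after the proposition, namely $\psi_\xi(t)=\frac{1}{\xi}\sinh\xi t$ and $\Phi_\xi(t)=\frac{1}{2\xi^3}(\xi t\cosh\xi t-\sinh\xi t)$, which differ from your normalization, but your argument only uses $L_\xi\Phi_\xi=\psi_\xi$, $\Phi_\xi(0)=0$ and that $\psi_\xi$ is a nonzero multiple of $\sinh\xi t$, all of which these satisfy, so it goes through unchanged.
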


Here we used the notations
\begin{align*}
\Phi_{\xi}\left(  t\right)   &  =\frac{1}{2\xi^{3}}\left(  \xi t\cosh\left(
\xi t\right)  -\sinh\left(  \xi t\right)  \right) \\
\psi_{\xi}\left(  t\right)   &  =\frac{1}{\xi}\sinh\xi t=L_{\xi}\Phi_{\xi
}\left(  t\right)  .
\end{align*}

Theorem 4.1 in \cite{kounchevRenderTsachev-BITpaper} states:


\begin{theorem}
\label{Thm1} Let $t_{1}<t_{2}<\dots<t_{n}$ be real numbers. Assume that $g$ is
natural $L_{\xi}^{2}-$spline interpolating the data $g_{1},...,g_{n}$. Then
there exists a $n\times\left(  n-2\right)  $ matrix $Q$ and a $\left(
n-2\right)  \times\left(  n-2\right)  $ matrix $R$ such that
\begin{equation}
Q^{T}\mathbf{g}=R\gamma. \label{QTgRgamma}%
\end{equation}
Here $\mathbf{g}$ and $\gamma$ denote the vectors defined in (\ref{g}) and
(\ref{gamma}).

\end{theorem}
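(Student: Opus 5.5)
The plan is to use Proposition \ref{Proppsij} to write $g$ on each interval $[t_j,t_{j+1}]$ as $\psi_j$, and then impose the only condition not yet built into this representation, namely continuity of the first derivative at the interior knots $t_2,\dots,t_{n-1}$. The representation (\ref{eqpsij}) already enforces continuity of $g$ and of $L_\xi g$ at the knots. Since $g''=L_\xi g+\xi^2 g$, this means $g''$ is continuous too. So the $C^2$ requirement reduces to the $n-2$ equations
\[
\psi_{j-1}'(t_j)=\psi_j'(t_j),\qquad j=2,\dots,n-1.
\]
Each such equation is linear in $g(t_{j-1}),g(t_j),g(t_{j+1})$ and in $L_\xi g(t_{j-1}),L_\xi g(t_j),L_\xi g(t_{j+1})$. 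Collecting the $g$-terms on one side and the $L_\xi g$-terms on the other will produce (\ref{QTgRgamma}).

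First I will compute the derivatives of the basis functions at the endpoints. With $h_j:=t_{j+1}-t_j$, we have
\[
(A_j^{[2]})'(t_j)=-\xi\coth(\xi h_j),\qquad (A_j^{[2]})'(t_{j+1})=-\frac{\xi}{\sinh\xi h_j},
\]
\[
(B_j^{[2]})'(t_{j+1})=\xi\coth(\xi h_j),\qquad (B_j^{[2]})'(t_j)=\frac{\xi}{\sinh\xi h_j}.
\]
For $A_j^{[1]}$ and $B_j^{[1]}$ I will use
\[
\Phi_\xi'(t)=\frac{1}{2\xi}\, t\sinh(\xi t),
\]
so that $\Phi_\xi'(0)=0$ and $\Phi_\xi$ is odd. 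These give explicit scalars, for example
\[
(A_j^{[1]})'(t_j)=\frac{\Phi_\xi'(-h_j)}{\psi_\xi(-h_j)}-(A_j^{[2]})'(t_j)\,\frac{\Phi_\xi(-h_j)}{\psi_\xi(-h_j)},
\]
and similar expressions at the other endpoint. All of these depend only on $h_j$ and $\xi$.

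Next I will write out the matching equation at $t_j$:
\[
L_\xi g(t_{j-1})(B_{j-1}^{[1]})'(t_j)+L_\xi g(t_j)\bigl[(B_{j-1}^{[1]})'(t_j)-(A_j^{[1]})'(t_j)\bigr]-L_\xi g(t_{j+1})(B_j^{[1]})'(t_j)
\]
\[
=-g(t_{j-1})(A_{j-1}^{[2]})'(t_j)+g(t_j)\bigl[(A_j^{[2]})'(t_j)-(B_{j-1}^{[2]})'(t_j)\bigr]+g(t_{j+1})(B_j^{[2]})'(t_j).
\]
The equation at $t_j$ is the $(j-1)$-th row of the system. The right-hand side is the $(j-1)$-th column of $Q$, which has nonzero entries only in positions $j-1,j,j+1$; this gives the $n\times(n-2)$ matrix $Q$. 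On the left, the terms $L_\xi g(t_1)$ and $L_\xi g(t_n)$ vanish by the natural boundary conditions. What remains involves only $\gamma$, giving a tridiagonal $(n-2)\times(n-2)$ matrix $R$.

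The main obstacle is the bookkeeping: getting the signs and the evaluation points right in the derivatives of $A^{[1]}$ and $B^{[1]}$ (where $\Phi_\xi$ is odd and $\psi_\xi$ is odd), and making sure the boundary terms drop out. The statement itself only asserts existence of $Q$ and $R$. So once the matching equations are written in linear form, existence is immediate, and no invertibility of $R$ needs to be established here.
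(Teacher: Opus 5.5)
Your proposal is correct and follows essentially the same route as the paper (compare the derivation at the start of Section 3 for the clamped case): impose $\psi_{j-1}'(t_j)=\psi_j'(t_j)$ at the interior knots, drop the boundary terms using $L_\xi g(t_1)=L_\xi g(t_n)=0$, and read off $Q$ and the tridiagonal $R$; your $Q$-coefficients agree with Theorem \ref{ThmQ}. One slip in your matching equation: the coefficient of $L_\xi g(t_{j-1})$ should be $(A_{j-1}^{[1]})'(t_j)=\sigma(t_j-t_{j-1})$, not $(B_{j-1}^{[1]})'(t_j)=\rho(t_j-t_{j-1})$, which is harmless for the existence claim but would give the wrong subdiagonal of $R$.
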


The matrices $R$ and $Q$ can be explicitly described:

\begin{theorem}
The matrix $R$ is symmetric and tridiagonal and all entries on the diagonal
and the first off-diagonal are positive. These entries are given by
\begin{align*}
R_{j,j}  &  =\rho\left(  t_{j}-t_{j-1}\right)  +\rho\left(  t_{j+1}
-t_{j}\right)  \text{, }\\
R_{j,j+1}  &  =\sigma\left(  t_{j+1}-t_{j}\right)  .
\end{align*}
where $\rho\left(  t\right)  $ is defined by
\begin{equation}
\rho\left(  t\right)  =\frac{1}{4\xi}\frac{\sinh\left(  2\xi t\right)  -2t\xi
}{\left(  \sinh\xi t\right)  ^{2}}. \label{eqDefrho2}%
\end{equation}
and $\sigma\left(  t\right)  $
\begin{equation}
\sigma\left(  t\right)  =\frac{1}{2\xi}\frac{\xi t\cosh\left(  \xi t\right)
-\sinh\left(  \xi t\right)  }{\sinh^{2}\left(  \xi t\right)  . }
\label{eqDefsig}%
\end{equation}

\end{theorem}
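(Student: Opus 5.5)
The approach is to derive the linear system from the $C^{1}$-continuity of $g$ at the interior knots. By Proposition \ref{Proppsij}, on each interval $[t_{j},t_{j+1}]$ the spline equals $\psi_{j}$, which is written in terms of the four data $g(t_{j}),g(t_{j+1}),L_{\xi}g(t_{j}),L_{\xi}g(t_{j+1})$. The conditions $g\in C^{2}$ and $L_{\xi}^{2}g=0$ on each piece make $g$ and $L_{\xi}g$ continuous automatically, since the representation uses common nodal values. The only genuine condition left is continuity of the first derivative:
\[
\psi_{j-1}^{\prime}(t_{j})=\psi_{j}^{\prime}(t_{j}),\qquad j=2,\dots,n-1 .
\]
These are $n-2$ equations. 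Each involves $L_{\xi}g(t_{j-1}),L_{\xi}g(t_{j}),L_{\xi}g(t_{j+1})$ and $g(t_{j-1}),g(t_{j}),g(t_{j+1})$. Collecting the $L_{\xi}g$ terms on one side and the $g$ terms on the other yields $Q^{T}\mathbf{g}=R\gamma$ with $R$ tridiagonal. The natural boundary condition $L_{\xi}g(t_{1})=L_{\xi}g(t_{n})=0$ removes the first and last columns, so $R$ is $(n-2)\times(n-2)$.

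The key computation is the endpoint derivatives of the basis functions. Write $h=t_{j+1}-t_{j}$ and use $\Phi_{\xi}(0)=0$, $\Phi_{\xi}^{\prime}(t)=\frac{1}{2\xi}t\sinh\xi t$, so $\Phi_{\xi}^{\prime}(0)=0$. Also $A_{j}^{[2]\prime}(t_{j})=-\xi\coth(\xi h)$, and $\Phi_{\xi}$ is odd with even derivative. From these:
\[
A_{j}^{[1]\prime}(t_{j})=\frac{\Phi_{\xi}^{\prime}(h)}{\psi_{\xi}(h)}-\xi\coth(\xi h)\frac{\Phi_{\xi}(h)}{\psi_{\xi}(h)},\qquad
A_{j}^{[1]\prime}(t_{j+1})=-\xi\frac{\Phi_{\xi}(h)}{\sinh(\xi h)\,\psi_{\xi}(h)} .
\]
The corresponding values for $B_{j}^{[1]}$ follow by symmetry. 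One simplifies using $\psi_{\xi}(h)=\sinh(\xi h)/\xi$.

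The coefficient of $L_{\xi}g(t_{j\pm1})$ in the $j$-th equation is then, up to a sign convention, $\xi\Phi_{\xi}(h)/\sinh^{2}(\xi h)$. This equals
\[
\frac{1}{2\xi}\,\frac{\xi h\cosh\xi h-\sinh\xi h}{\sinh^{2}\xi h}=\sigma(h).
\]
The diagonal coefficient collects one contribution from each side. The contribution $\xi\coth(\xi h)\Phi_{\xi}(h)\xi/\sinh(\xi h)-\tfrac{1}{2}h$ should reduce, via $\sinh 2x=2\sinh x\cosh x$ and $\cosh^{2}-\sinh^{2}=1$, to
\[
\rho(h)=\frac{1}{4\xi}\,\frac{\sinh 2\xi h-2\xi h}{\sinh^{2}\xi h}.
\]
Summing over the two neighbouring intervals gives $R_{j,j}$. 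Symmetry of $R$ holds because the coefficient linking $j$ and $j+1$ depends only on $h=t_{j+1}-t_{j}$, whichever equation it appears in.

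Positivity is elementary. For $x>0$ we have $\sinh 2x>2x$, so $\rho>0$. We also have $x\cosh x>\sinh x$, since the derivative of the difference is $x\sinh x>0$, so $\sigma>0$. Both expressions are unchanged under $h\mapsto -h$, but only $h>0$ matters here.

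I expect the main obstacle to be the sign and normalization bookkeeping in the derivative identities. One must choose the sign of the equation so that the $L_{\xi}g$ coefficients come out with $+\rho$ and $+\sigma$ rather than their negatives. One must also verify the hyperbolic simplification to $\rho$, which I would check by expanding everything in terms of $x=\xi h$. The matrix $Q^{T}$ is obtained in the same pass from the derivatives of $A^{[2]}$ and $B^{[2]}$, namely $\pm\xi\coth$ and $\xi/\sinh$ terms, but the statement does not require it.
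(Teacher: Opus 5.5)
Your route ($C^{1}$ matching at $t_2,\dots,t_{n-1}$ plus the endpoint derivatives of $A_j^{[1]},B_j^{[1]}$) is the one the paper relies on. It quotes exactly these derivatives from the earlier work in the proof of its main theorem. However, the derivatives you wrote down have the wrong sign. You apparently used $\psi_\xi(t_j-t_{j+1})=\psi_\xi(h)$, but $\psi_\xi$ is odd, so this equals $-\psi_\xi(h)$. With $h=t_{j+1}-t_j$ and $x=\xi h$, the correct values are
\[
A_j^{[1]\prime}(t_j)=\xi\coth(\xi h)\,\frac{\Phi_\xi(h)}{\psi_\xi(h)}-\frac{h}{2}=\frac{x-\sinh x\cosh x}{2\xi\sinh^{2}x}=-\rho(h),\qquad A_j^{[1]\prime}(t_{j+1})=\frac{\Phi_\xi(h)}{\psi_\xi(h)^{2}}=\sigma(h).
\]
Since $A_j^{[1]}(t_j+t_{j+1}-t)=B_j^{[1]}(t)$, it follows that $B_j^{[1]\prime}(t_j)=-\sigma(h)$ and $B_j^{[1]\prime}(t_{j+1})=\rho(h)$. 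These are the identities the paper uses.

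Consequently, your claim that $\xi\coth(\xi h)\Phi_\xi(h)\xi/\sinh(\xi h)-\frac12 h$ reduces to $\rho(h)$ is false: that expression equals $-\rho(h)<0$. It is the true $A_j^{[1]\prime}(t_j)$. Conversely, your formula for $A_j^{[1]\prime}(t_j)$ is actually the true diagonal contribution $+\rho(h)$, so the two are swapped. There are two smaller slips as well. First, $\xi\Phi_\xi(h)/\sinh^{2}(\xi h)=\sigma(h)/\xi$; the correct off-diagonal expression is $\xi^{2}\Phi_\xi(h)/\sinh^{2}(\xi h)$. Second, $\rho$ and $\sigma$ are odd, not even. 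That is harmless here, but the paper does use the oddness of $\sigma$.

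These errors matter because the sign is not a convention, which is exactly the obstacle you flagged. You may multiply a whole equation by $-1$. But the relative sign between the $\sigma$-entries and the $\rho$-entries is forced by the computation. So is the relative sign between the two $\rho$-contributions from $[t_{j-1},t_j]$ and $[t_j,t_{j+1}]$. That forced sign is precisely the content of ``the off-diagonal entries are positive.''

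Taken at face value, your formula gives $-\sigma(h_{j-1})$ as the coefficient $A_{j-1}^{[1]\prime}(t_j)$ of $\gamma_{j-1}$. Your claimed reduction gives $+\rho(h_{j-1})+\rho(h_j)$ on the diagonal. No overall sign choice repairs that.

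With the correct values, use the convention the paper adopts for $\widetilde R$: $\widetilde R_{j,j-1}=A_{j-1}^{[1]\prime}(t_j)$, $\widetilde R_{j,j}=B_{j-1}^{[1]\prime}(t_j)-A_j^{[1]\prime}(t_j)$, $\widetilde R_{j,j+1}=-B_j^{[1]\prime}(t_j)$. Then row $j$ is directly $\sigma(h_{j-1}),\ \rho(h_{j-1})+\rho(h_j),\ \sigma(h_j)$, with $h_i=t_{i+1}-t_i$, and no sign choice is needed.

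Symmetry then comes from the identity $-B_j^{[1]\prime}(t_j)=A_j^{[1]\prime}(t_{j+1})$, which the reflection supplies. Observing that both entries ``depend only on $h$'' does not by itself show they are equal. Your positivity arguments for $\rho$ and $\sigma$ are correct.
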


\begin{theorem}
\label{ThmQ} The elements of the $n\times\left(  n-2\right)  $ matrix $Q$ are
given, for $i=1,...,n$ and $j=2,3,...,n-1,$ by the formulas
\begin{equation}
q_{j-1,j}=\frac{\xi}{\sinh\xi\left(  t_{j}-t_{j-1}\right)  },\qquad
q_{j+1,j}=\frac{\xi}{\sinh\xi\left(  t_{j+1}-t_{j}\right)  } \label{Qij}%
\end{equation}
and
\[
q_{jj}=-\xi\left(  \frac{\cosh\xi\left(  t_{j+1}-t_{j}\right)  }{\sinh
\xi\left(  t_{j+1}-t_{j}\right)  }+\frac{\cosh\xi\left(  t_{j}-t_{j-1}\right)
}{\sinh\xi\left(  t_{j}-t_{j-1}\right)  }\right)  .
\]
The rest of the elements in $Q$ are zero.
\end{theorem}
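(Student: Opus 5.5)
The plan is to recover Q from the $C^{1}$ matching conditions at the interior knots, which are exactly the equations that give (\ref{QTgRgamma}) in Theorem \ref{Thm1}. Fix an interior index $j\in\{2,\dots,n-1\}$ and write $h_{j}:=t_{j+1}-t_{j}$. By Proposition \ref{Proppsij}, $g=\psi_{j-1}$ on $(t_{j-1},t_{j})$ and $g=\psi_{j}$ on $(t_{j},t_{j+1})$. Since $g\in C^{2}$, in particular $C^{1}$, we have
\[
\psi_{j}^{\prime}(t_{j})-\psi_{j-1}^{\prime}(t_{j})=0 .
\]
This single linear relation, which involves only $g_{j-1},g_{j},g_{j+1}$ and $L_{\xi}g(t_{j-1}),L_{\xi}g(t_{j}),L_{\xi}g(t_{j+1})$, is the $(j-1)$-th row of (\ref{QTgRgamma}).

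I will split each $\psi$ into its $A^{[2]},B^{[2]}$ part, which multiplies the data $g(t_{i})$, and its $A^{[1]},B^{[1]}$ part, which multiplies $L_{\xi}g(t_{i})$. The first part goes on the left and produces the $j$-th column of $Q$, i.e.\ the $j$-th row of $Q^{T}$. The second part goes on the right with a minus sign and produces $R$, which the previous theorem already describes, so here I only track the $g$-terms.

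The key step is differentiating (\ref{eqdefAB}) at the endpoints. At $t_{j}$:
\[
A_{j}^{[2]\prime}(t_{j})=\frac{\xi\cosh(0)}{\sinh\xi(t_{j}-t_{j+1})}\cdot\cosh\xi h_{j}\Big/\cosh(0)=-\xi\frac{\cosh\xi h_{j}}{\sinh\xi h_{j}},
\qquad
B_{j}^{[2]\prime}(t_{j})=\frac{\xi}{\sinh\xi h_{j}} .
\]
More simply, $A_{j}^{[2]\prime}(t_{j})=\xi\cosh(\xi(t_{j}-t_{j+1}))/\sinh\xi(t_{j}-t_{j+1})=-\xi\coth\xi h_{j}$. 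Similarly, on the interval $[t_{j-1},t_{j}]$:
\[
A_{j-1}^{[2]\prime}(t_{j})=-\frac{\xi}{\sinh\xi h_{j-1}},
\qquad
B_{j-1}^{[2]\prime}(t_{j})=\xi\frac{\cosh\xi h_{j-1}}{\sinh\xi h_{j-1}} .
\]

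Substituting into $\psi_{j}^{\prime}(t_{j})-\psi_{j-1}^{\prime}(t_{j})$, the $g$-part becomes
\[
\frac{\xi}{\sinh\xi h_{j-1}}\,g_{j-1}-\xi\left(\coth\xi h_{j}+\coth\xi h_{j-1}\right)g_{j}+\frac{\xi}{\sinh\xi h_{j}}\,g_{j+1}.
\]
This is precisely $\sum_{i}q_{i,j}g_{i}$ with the stated $q_{j-1,j}$, $q_{jj}$, $q_{j+1,j}$. No other $g_{i}$ appears in this relation, so all remaining entries of the $j$-th column are zero.

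The only real obstacle is bookkeeping. First, the sign conventions must be consistent, since $\sinh\xi(t_{j}-t_{j+1})$ is negative. Second, moving the $L_{\xi}g$-terms to the right-hand side must give $+R\gamma$ with the positive $R$ of the previous theorem, rather than $-R\gamma$. I would check this by computing one derivative, for instance $B_{j}^{[1]\prime}(t_{j})$ from (\ref{eqB1}), and verifying that it equals $-\sigma(h_{j})$. That would confirm the orientation $Q^{T}\mathbf{g}=R\gamma$, and not its negative, for the chosen order of subtraction.

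Finally, the boundary terms need a word. In the natural case $L_{\xi}g(t_{1})=L_{\xi}g(t_{n})=0$, so the rows for $j=2$ and $j=n-1$ have no extra $\gamma$-contributions. The $g_{1}$ and $g_{n}$ contributions are still captured by $q_{1,2}$ and $q_{n,n-1}$, which is why $Q$ is $n\times(n-2)$.
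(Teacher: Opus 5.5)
Your proposal is correct and takes the same route as the paper: $Q$ is read off from the $g$-coefficients of the $C^{1}$ matching condition $\psi_j'(t_j)=\psi_{j-1}'(t_j)$ at interior knots. These are exactly the interior entries $-\frac{d}{dt}A_{j-1}^{[2]}(t_j)$, $\frac{d}{dt}A_{j}^{[2]}(t_j)-\frac{d}{dt}B_{j-1}^{[2]}(t_j)$ and $\frac{d}{dt}B_{j}^{[2]}(t_j)$ written out in Section 3 (the theorem itself is quoted from the earlier paper), your endpoint derivatives are right, and the orientation check you defer works out, since $\Phi_\xi'(0)=0$ gives $\frac{d}{dt}B_j^{[1]}(t_j)=-\Phi_\xi(h_j)/\psi_\xi(h_j)^2=-\sigma(h_j)$. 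The only thing to fix is your garbled first display: the derivative of $\sinh\xi(t-t_{j+1})$ at $t_j$ is directly $\xi\cosh\xi h_j$, not $\xi\cosh(0)$.
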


The arguments can be reversed to construct a natural $L_{\xi}^{2}$-spline
interpolating given data $z=\left(  z_{1},...,z_{n}\right)  ^{T}$ as we shall
now outline:

\begin{enumerate}
\item[Step 1] Set $g_{i}=z_{i}$ for $i=1,..n,$

\item[Step 2] Set $x=Q^{T}\mathbf{g}$ and solve $R\gamma=x$.
\end{enumerate}

Then the natural cubic $L_{\xi}^{2}$-spline $g$ interpolating the data $z$ is
then defined on each interval $\left(  t_{j},t_{j+1}\right)  $ by
\[
\psi_{j}\left(  t\right)  :=\gamma_{j}A_{j}^{\left[  1\right]  }\left(
t\right)  +\gamma_{j+1}B_{j}^{\left[  1\right]  }\left(  t\right)  +g_{j}%
A_{j}^{\left[  2\right]  }\left(  t\right)  +g_{j+1}B_{j}^{\left[  2\right]
}\left(  t\right)  .
\]

\section{The main results}

In this section we want to prove analogues of the results in Section $2$ for
clamped $L_{\xi}^{2}-$splines. As outlined in the last section, the $L_{\xi
}^{2}-$spline $g(\cdot)$ coincides on the interval $(t_{j},t_{j+1})$ with the
function
\begin{equation}
\psi_{j}\left(  t\right)  :=L_{\xi}g\left(  t_{j}\right)  A_{j}^{\left[
1\right]  }\left(  t\right)  +L_{\xi}g\left(  t_{j+1}\right)  B_{j}^{\left[
1\right]  }\left(  t\right)  +g\left(  t_{j}\right)  A_{j}^{\left[  2\right]
}\left(  t\right)  +g\left(  t_{j+1}\right)  B_{j}^{\left[  2\right]  }\left(
t\right)  , \label{psi_j}%
\end{equation}
Since in (\ref{psi_j}) the numbers $g(t_{j})=g_{j}$ for $j=1,2,\dots,n$ are
given, generating the spline $g(\cdot)$ reduces to finding the numbers
$L_{\xi}g(t_{j})$ for $j=1,2,\dots,n$. For ease of notation we denote%
\[
\gamma_{j}:=L_{\xi}g(t_{j})\text{ for }j=1,2,\dots,n
\]
and, thus, formula (\ref{psi_j}) transforms to
\begin{equation}
\psi_{j}(t)=\gamma_{j}A_{j}^{[1]}(t)+\gamma_{j+1}B_{j}^{[1]}(t)+g_{j}%
A_{j}^{[2]}(t)+g_{j+1}B_{j}^{[2]}(t). \label{psi_j_2}%
\end{equation}

We next derive a system of $n$ linear algebraic equations in $n$ unknowns, in
which $\{\gamma_{j}\}_{j=1}^{n}$ are the unknowns and $\{g_{j}:=g(t_{j}%
)\}_{j=1}^{n}$ are given. From the continuity conditions at the knots
$t_{2}<t_{3}<\dots<t_{n-1}$ of $g^{\prime}$ we obtain the equalities
\[
g^{\prime}(t_{j}-0)=g^{\prime}(t_{j}+0)\quad\left(  \text{i.e. }\quad
\psi_{j-1}^{\prime}(t_{j})=\psi_{j}^{\prime}(t_{j})\right)
\]
for $j=2,3,\dots,n-1$. From the last and from (\ref{psi_j_2}) we obtain the
following $n-2$ equations:%

\begin{align*}
&  \gamma_{j-1}\frac{d}{dt}A_{j-1}^{[1]}(t_{j})+\gamma_{j}\frac{d}{dt}%
B_{j-1}^{[1]}(t_{j})+g_{j-1}\frac{d}{dt}A_{j-1}^{[2]}(t_{j})+g_{j}\frac{d}%
{dt}B_{j-1}^{[2]}(t_{j})\\
&  =\gamma_{j}\frac{d}{dt}A_{j}^{[1]}(t_{j})+\gamma_{j+1}\frac{d}{dt}%
B_{j}^{[1]}(t_{j})+g_{j}\frac{d}{dt}A_{j}^{[2]}(t_{j})+g_{j+1}\frac{d}%
{dt}B_{j}^{[2]}(t_{j})
\end{align*}
for $j=2,3,\dots,n-1$. These equations may be rewritten in the form
\begin{align}
&  \gamma_{j-1}\frac{d}{dt}A_{j-1}^{[1]}(t_{j})+\gamma_{j}\left(  \frac{d}%
{dt}B_{j-1}^{[1]}(t_{j})-\frac{d}{dt}A_{j}^{[1]}(t_{j})\right)  -\gamma
_{j+1}\frac{d}{dt}B_{j}^{[1]}(t_{j})\nonumber\label{equ_for_gamma}\\
&  =-g_{j-1}\frac{d}{dt}A_{j-1}^{[2]}(t_{j})+g_{j}\left(  \frac{d}{dt}%
A_{j}^{[2]}(t_{j})-\frac{d}{dt}B_{j-1}^{[2]}(t_{j})\right)  +g_{j+1}\frac
{d}{dt}B_{j}^{[2]}(t_{j})\nonumber
\end{align}
for $j=2,\dots,n-1$.

The \emph{clamped boundary} conditions $\psi_{1}^{\prime}(t_{1})=g^{\prime
}(t_{1})=d_{1}$ and $\psi_{n-1}^{\prime}(t_{n})=g^{\prime}(t_{n})=d_{2}$ give
two additional equations:
\begin{equation}
\gamma_{1}\frac{d}{dt}A_{1}^{[1]}(t_{1})+\gamma_{2}\frac{d}{dt}B_{1}%
^{[1]}(t_{1})=-g_{1}\frac{d}{dt}A_{1}^{[2]}(t_{1})-g_{2}\frac{d}{dt}%
B_{1}^{[2]}(t_{1})+d_{1}, \label{equ_clamp_ini}%
\end{equation}
and%
\begin{equation}
\gamma_{n-1}\frac{d}{dt}A_{n-1}^{[1]}(t_{n})+\gamma_{n}\frac{d}{dt}%
B_{n-1}^{[1]}(t_{n})=-g_{n-1}\frac{d}{dt}A_{n-1}^{[2]}(t_{n})-g_{n}\frac
{d}{dt}B_{n-1}^{[2]}(t_{n})+d_{2} \label{equ_clamp_fin}%
\end{equation}
From above three equations we 
obtain a linear system of $n$ equations in $n$ unknowns $\gamma_{1},\gamma
_{2},\dots,\gamma_{n}$ which we want to write in matrix form of the type
\[
\widetilde{R}\mathbf{\widetilde{\mathbf{\gamma}}}^{T}=\widetilde{Q}%
\mathbf{g}^{T}+\mathbf{d}^{T}\text{ for }\mathbf{d}=\underset{n}%
{\underbrace{(d_{1},0,...,0,d_{2}})}%
\]
where $\mathbf{\widetilde{\mathbf{\gamma}}}=\{\gamma_{1},\gamma_{2}%
,\dots,\gamma_{n}\}$ is the vector of the unknowns and $\mathbf{g}%
=\{g_{1},g_{2},\dots,g_{n}\}$ are the given data. For convenience define
as in
\cite{kounchevRenderTsachev-BITpaper}
\[
\widetilde{R}_{j,j-1}=\frac{d}{dt}A_{j-1}^{[1]}(t_{j}),\quad\widetilde
{R}_{j,j}=\frac{d}{dt}B_{j-1}^{[1]}(t_{j})-\frac{d}{dt}A_{j}^{[1]}%
(t_{j}),\quad\widetilde{R}_{j,j+1}=-\frac{d}{dt}B_{j}^{[1]}(t_{j})
\]
for $j=2,\dots,n-1$. The $n\times n$ matrix $\widetilde{R}$ is defined by%
\[
\widetilde{R}=\left(
\begin{array}
[c]{cccccc}%
\frac{d}{dt}A_{1}^{[1]}(t_{1}) & \frac{d}{dt}B_{1}^{[1]}(t_{1}) & 0 & \cdots &
\cdots & 0\\
\widetilde{R}_{2,1} & \widetilde{R}_{2,2} & \widetilde{R}_{2,3} & 0 & \cdots &
0\\
0 & \ddots & \ddots & \ddots & \ddots & \vdots\\
\vdots & \ddots & \ddots & \ddots & \ddots & 0\\
0 & \cdots & 0 & \widetilde{R}_{n-1,n-2} & \widetilde{R}_{n-1,n-1} &
\widetilde{R}_{n-1,n}\\
0 & \cdots & \cdots & 0 & \frac{d}{dt}A_{n-1}^{[1]}(t_{n}) & \frac{d}%
{dt}B_{n-1}^{[1]}(t_{n})
\end{array}
\right)
\]
Obviously, the structure of the matrix $\widetilde{R}$ is tridiagonal. Now we
express the right hand side of the equations by a $n\times n$-matrix
$\widetilde{Q}$ whose entries are defined by
\[
\widetilde{Q}_{j-1,j}=-\frac{d}{dt}A_{j-1}^{\left[  2\right]  }\left(
t_{j}\right)  \text{ and }\widetilde{Q}_{j+1,j}=\frac{d}{dt}B_{j}^{\left[
2\right]  }\left(  t_{j}\right)
\]
and
\[
\widetilde{Q}_{jj}=\frac{d}{dt}A_{j}^{\left[  2\right]  }\left(  t_{j}\right)
-\frac{d}{dt}B_{j-1}^{\left[  2\right]  }\left(  t_{j}\right)  .
\]
Then it is easy to see that $\ $%
\[
\widetilde{Q}=\left(
\begin{array}
[c]{cccccc}%
-\frac{d}{dt}A_{1}^{[2]}(t_{1}) & -\frac{d}{dt}B_{1}^{[2]}(t_{1}) & 0 & 0 &
\cdots & 0\\
\widetilde{Q}_{2,1} & \widetilde{Q}_{2,2} & \widetilde{Q}_{2,3} & 0 &  & 0\\
0 & \ddots & \ddots & \ddots & \ddots & \vdots\\
\vdots & \ddots & \ddots & \ddots & \ddots & 0\\
0 & \cdots & \ddots & \widetilde{Q}_{n-1,n-2} & \widetilde{Q}_{n-1,n-1} &
\widetilde{Q}_{n-1,n}\\
0 & 0 & \cdots & 0 & -\frac{d}{dt}A_{n-1}^{[2]}(t_{n}) & -\frac{d}{dt}%
B_{n-1}^{[2]}(t_{n})
\end{array}
\right)
\]

The following is the main result of this paper:

\begin{theorem}
The matrix $\tilde{R} $ is strictly (row) diagonal dominant, in particular invertible
\end{theorem}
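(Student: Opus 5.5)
The plan is to compute every nonzero entry of $\widetilde{R}$ explicitly in terms of the functions $\rho$ and $\sigma$ from (\ref{eqDefrho2}) and (\ref{eqDefsig}). Strict row diagonal dominance then reduces to the single scalar inequality $\rho(h)>\sigma(h)>0$ for $h>0$. Invertibility follows from the Levy–Desplanques theorem, i.e.\ Gershgorin's circle theorem.

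\textbf{Step 1: derivatives of the basis functions.} Fix an interval and set $h=t_{j+1}-t_j>0$. Differentiating the definition of $\Phi_\xi$ gives $\Phi_\xi'(t)=\frac{t\sinh(\xi t)}{2\xi}$, and by definition $\psi_\xi(h)=\frac{\sinh \xi h}{\xi}$. From (\ref{eqB1}), using $\Phi_\xi(0)=\Phi_\xi'(0)=0$, I expect
\[
\tfrac{d}{dt}B_j^{[1]}(t_j)=-\frac{\xi^2\Phi_\xi(h)}{\sinh^2\xi h}=-\sigma(h).
\]
At the right endpoint,
\[
\tfrac{d}{dt}B_j^{[1]}(t_{j+1})=\frac h2-\frac{\cosh\xi h\,(\xi h\cosh\xi h-\sinh\xi h)}{2\xi\sinh^2\xi h}=\frac{\sinh\xi h\cosh\xi h-\xi h}{2\xi\sinh^2\xi h}=\rho(h).
\]
The function $A_j^{[1]}$ is the mirror image of $B_j^{[1]}$ under $t\mapsto t_j+t_{j+1}-t$, since $\Phi_\xi$ is odd. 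Reflection flips the sign of derivatives and swaps the endpoints, so $\frac{d}{dt}A_j^{[1]}(t_j)=-\rho(h)$ and $\frac{d}{dt}A_j^{[1]}(t_{j+1})=\sigma(h)$.

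\textbf{Step 2: the rows of $\widetilde{R}$.} Write $h_j=t_{j+1}-t_j$. By Step 1, each interior row $j=2,\dots,n-1$ is
\[
\bigl(\sigma(h_{j-1}),\ \rho(h_{j-1})+\rho(h_j),\ \sigma(h_j)\bigr),
\]
which agrees with the natural case in the recap theorem. The first row is $(-\rho(h_1),-\sigma(h_1))$ and the last row is $(\sigma(h_{n-1}),\rho(h_{n-1}))$. Multiplying a row by $-1$ does not affect diagonal dominance. So it suffices to show $\rho(h)>\sigma(h)>0$ for all $h>0$: this gives $\rho(h_1)>\sigma(h_1)$ for the first row, $\rho(h_{n-1})>\sigma(h_{n-1})$ for the last, and $\rho(h_{j-1})+\rho(h_j)>\sigma(h_{j-1})+\sigma(h_j)$ for interior rows.

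\textbf{Step 3: the key inequality (main obstacle).} Put $x=\xi h>0$. Clearing the common positive factor $2\xi\sinh^2 x$, the claim $\rho>\sigma$ becomes
\[
f(x):=\sinh x\cosh x-x-2x\cosh x+2\sinh x>0 .
\]
I would compare Taylor coefficients, using $\sinh x\cosh x=\tfrac12\sinh 2x$. The coefficient of $x^{2k+1}$ in $f$ is
\[
\frac{4^k-2(2k+1)+2}{(2k+1)!}=\frac{4^k-4k}{(2k+1)!}.
\]
This vanishes for $k=0,1$ and is strictly positive for $k\ge2$, so $f(x)>0$ for $x>0$.

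Positivity of $\sigma$ is the inequality $x\cosh x>\sinh x$. Its series has coefficients $\frac{1}{(2k)!}-\frac{1}{(2k+1)!}$, which are positive for $k\ge1$.

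For $\xi=0$ (the cubic case) one passes to the limit and gets $\rho=h/3$ and $\sigma=h/6$, so the inequality still holds.
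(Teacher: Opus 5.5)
Your proposal is correct and follows essentially the paper's route: express the boundary rows as $(-\rho(h_1),-\sigma(h_1))$ and $(\sigma(h_{n-1}),\rho(h_{n-1}))$ and the interior rows as in the natural case, then conclude from $0<\sigma<\rho$ via the Levy--Desplanques theorem, except that you derive the entry formulas and the inequality yourself rather than citing the earlier BIT paper. One bookkeeping fix: your $f$ equals $2\xi\sinh^2x\,(\rho-2\sigma)$, not $2\xi\sinh^2x\,(\rho-\sigma)$, so you are actually proving the paper's stronger bound $\sigma<\tfrac12\rho$ (harmless, since $\sigma>0$), and the $x^{1}$-coefficient vanishes only once the $-x$ term is included, which your closed form $\frac{4^k-4k}{(2k+1)!}$ leaves out at $k=0$.
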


\proof We show that $\widetilde{R}$ is strictly diagonal dominant; recall that
a matrix $A=\{a_{i,j}\}_{i=1j=1}^{n\;\;\;\;n}$ is called strictly (row)
diagonal dominant if $|a_{i,i}|>\sum_{i\neq j}|a_{i,j}|$ for all
$i=1,2,\dots,n$), see e.g. \cite{GreenSilverman}, \cite{Horn-Johnson}. It is
proved in \cite{kounchevRenderTsachev-BITpaper} (see the proof of Theorem
$4.2$) that
\[
\frac{d}{dt}B_{1}^{[1]}(t_{1})=-\sigma(t_{2}-t_{1}),\qquad\text{and }%
\qquad\frac{d}{dt}A_{1}^{[1]}(t_{1})=-\rho(t_{2}-t_{1})
\]
(just before Theorem $4.3$). We also have $\sigma(x)>0$ for $x>0$ and
$\rho(x)>0$ for $x>0$. Since in the proof of Theorem $4.4$ of
\cite{kounchevRenderTsachev-BITpaper} it is established that $\sigma
(x)<\frac{1}{2}\rho(x)$ for $x>0$, we obtain
\[
\left\vert \frac{d}{dt}B_{1}^{[1]}(t_{1})\right\vert =\sigma(t_{2}-t_{1}%
)\leq\frac{1}{2}\rho(t_{2}-t_{1})<\rho(t_{2}-t_{1})=\left\vert \frac{d}%
{dt}A_{1}^{[1]}(t_{1})\right\vert .
\]
Next we deal with the last row of the matrix $R$. Using formula (3.8) from
\cite{kounchevRenderTsachev-BITpaper} we obtain:
\begin{align*}
\frac{d}{dt}A_{n-1}^{[1]}(t_{n})  &  =\frac{\Phi_{\xi}^{\prime}(t_{n}-t_{n}%
)}{\psi_{\xi}(t_{n-1}-t_{n})}-\frac{d}{dt}A_{n-1}^{[2]}(t_{n})\frac{\Phi_{\xi
}(t_{n-1}-t_{n})}{\psi_{\xi}(t_{n-1}-t_{n})}=-\frac{\psi_{\xi}^{\prime}%
(t_{n}-t_{n})}{\psi_{\xi}(t_{n-1}-t_{n})}\frac{\Phi_{\xi}(t_{n-1}-t_{n})}%
{\psi_{\xi}(t_{n-1}-t_{n})}\\
&  =-\frac{\Phi_{\xi}(t_{n-1}-t_{n})}{\psi_{\xi}^{2}(t_{n-1}-t_{n})}%
=-\sigma(t_{n-1}-t_{n})=\sigma(t_{n}-t_{n-1})
\end{align*}
(the second equality above follows from the proof of Theorem $4.2$ of
\cite{kounchevRenderTsachev-BITpaper}). Also, we have the equality $\frac
{d}{dt}B_{n-1}^{[1]}(t_{n})=\rho(t_{n}-t_{n-1})$ (which follows from
\cite{kounchevRenderTsachev-BITpaper} -- formula (3.3) and the proof of
Theorem $4.2$). Hence, again
\[
\frac{d}{dt}A_{n-1}^{[1]}(t_{n})=\sigma(t_{n}-t_{n-1})\leq\frac{1}{2}%
\rho(t_{n}-t_{n-1})<\rho(t_{n}-t_{n-1})=\frac{d}{dt}B_{n-1}^{[1]}(t_{n}).
\]
So, we established the diagonal dominance for the first and for the last rows.
The respective dominance for the rows of $R$ with numbers $2,3,\dots,n-1$ is
established in \cite{kounchevRenderTsachev-BITpaper}.

The strict (row) diagonal dominance implies that the matrix $R$ is invertible,
by Theorem $6.1.10$ in \cite{Horn-Johnson}.

\endproof

Thus we obtain
\[
\widetilde{\gamma}=\left(  \widetilde{R}\right)  ^{-1}\left(  \widetilde
{Q}\mathbf{g}^{T}+\mathbf{d}^{T}\right)
\]
and we may proceed as at the end of Section 2 for computing the $L-$spline
$g\left(  t\right)  $.

The present $L-$splines may be used to create multivariate clamped polysplines, cf. \cite{KounchevBOOK}. 
The last may be used as an alternative of the PINNs for solving equations of Mathematical 
Physics.  

\section{Figure}

The fast algorithm above was implemented in Matlab. As an example we
provide the figure of a clamped $L-$spline for $L=L_{\xi}^{2}$ with $\xi=5,$
on the interval $\left[  0,1\right]  $ with knots%
\[
t_{1}=0,t_{2}=0.1667,t_{3}=0.3333,t_{4}=0.5000,t_{5}=0.6667,t_{6}%
=0.8333,t_{7}=1
\]
with derivatives $g^{\prime}\left(  t_{1}\right)  =25$ and $g^{\prime}\left(
t_{7}\right)  =25$ interpolating the function $f\left(  t\right)  =\sin\left(
25t\right)  .$ In the figure, the interpolating $L_{5}^{2}-$spline is compared
with the linear spline interpolating $f\left(  t\right)  .$

\begin{figure}[htb]
	\centering
	\includegraphics[width=1.0\linewidth,keepaspectratio]{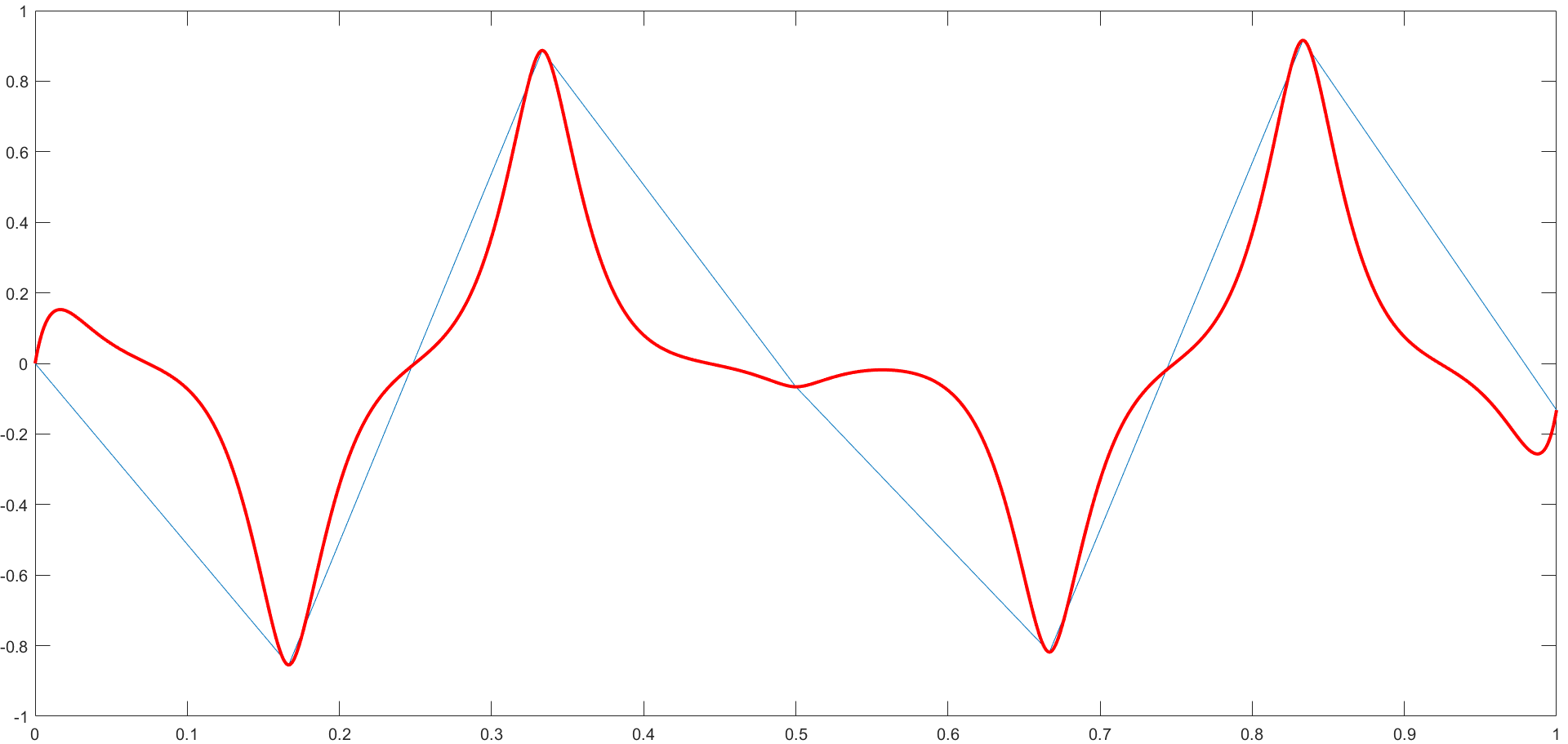}
	\caption{Clamped L-spline}
\end{figure}

ACKNOWLEDGEMENT

The work of OK was partially supported by a project with Bulgarian NSF within the Multilateral competition for scientific and technological collaboration in the Danube region - 2024: Application of novel AI methods in analyzing Big data in Astrophysics and Physics: multidisciplinary and multilateral effort, entry number BG-175467353-2024-18-0018 / FNI-238 of  17.01.2025. The work of GS and HR was partially supported by project with Bulgarian NSF, KP-06-RILA. 
The work of TT was partially supported by the Centre of Excellence in
Informatics and ICT under the Grant No BG16RFPR002-1.014-0018-C01, financed by
the Research, Innovation and Digitalization for Smart Transformation Programme
2021-2027 and co-financed by the European Union.

\bigskip
\textbf{Affiliations: }

\medskip
CONTACT PERSON: 
\medskip
O. Kounchev, Institute of Mathematics and Informatics - BAS, Centre of
Excellence in Informatics and Information and Communication Technologies,
Sofia, Bulgaria; email: kounchev@math.bas.bg

\medskip
H. Render, Institute of Mathematics and Informatics - BAS; email: render65@gmx.com

\medskip
G. Simeonov, Institute of Mathematics and Informatics - BAS; email: gsimeonov@math.bas.bg

\medskip
Ts. Tsachev, Institute of Mathematics and Informatics - BAS, Centre of
Excellence in Informatics and Information and Communication Technologies,
Sofia, Bulgaria;  tsachev@math.bas.bg

\bigskip\

\end{document}